\setlist[enumerate,1]{label={\upshape(\roman*)}}
\newtheorem{thm}{Theorem}[section]
\newtheorem{prop}[thm]{Proposition}
\newtheorem{lem}[thm]{Lemma}
\theoremstyle{definition}
\newtheorem{dfn}[thm]{Definition}
\newtheorem{rem}[thm]{Remark}
\DeclareMathOperator{\supp}{supp}
\newcommand{\R}{\mathbb{R}}
\newcommand{\bu}{\boldsymbol{u}}
\newcommand{\nexteq}{\displaybreak[0]\\ &=}
\newcommand{\iantipodal}{symmetric}
\newcommand{\aiantipodal}{a symmetric}
\newcommand{\nantipodal}{non-symmetric}
\title{Antipodality of Spherical Designs with Odd Harmonic Indices}
\author{Ryutaro Misawa}
\address{Graduate School of Information Sciences, Tohoku University}
\email{misawa.ryutaro.q2@dc.tohoku.ac.jp}
\author{Akihiro Munemasa}
\address{Graduate School of Information Sciences, Tohoku University}
\email{munemasa@tohoku.ac.jp}
\author{Masanori Sawa}
\address{Graduate School of System Informatics, Kobe University}
\email{sawa@people.kobe-u.ac.jp}
\subjclass{65D32,05E99}
\date{\today}
\begin{document}

\begin{abstract}
We determine the smallest size of a non-antipodal spherical design with harmonic indices $\{1,3,\dots,2m-1\}$ to be $2m+1$, where $m$ is a positive integer. This is achieved by proving an analogous result for interval designs.
\end{abstract}

\maketitle
\section{Introduction}

Spherical designs appear in approximation theory, frame theory and combinatorics (see \cite{MRCHEN, MR485471, MR3752185}). A more general concept of spherical designs was introduced by \cite{MR3339051}. A finite subset $X$ of a unit sphere $\mathbb{S}^{d-1}$ in $\R^d$ is defined to be a spherical design of harmonic index $t$ if the sum of the values on $X$ vanishes for every harmonic homogeneous polynomial of degree $t$. If $T$ is a set of positive integers and $X$ is a spherical design of harmonic index $t$ for every $t\in T$, we simply say that $X$ is a spherical $T$-design. Then the ordinary spherical $t$-design is nothing but a spherical $\{1,2,\dots,t\}$-design. For certain finite sets $T$ of even integers, spherical $T$-designs with additional conditions on size or the number of distances have been considered (see, e.g., \cite{MR4662467,MR4080490,MR3434430,MR3650263}).

If $X$ consists of a pair of antipodal points on $\mathbb{S}^{d-1}$, then $X$ is a spherical design of harmonic index $t$ for every odd positive integer $t$. Conversely, it is not difficult to show that if \(X\subset\mathbb{S}^{d-1}\) is a spherical design of harmonic index $t$ for all odd positive integers $t$, then $X$ is necessarily a union of pairs of antipodal points on $\mathbb{S}^{d-1}$. 
For a brief proof of this
fact, suppose that there exists a point $x\in X$ with $-x\notin X$. 
Since $|\langle x,y\rangle|<1$ for all $y\in X\setminus\{x\}$,
\[
\sum_{y\in X}\langle x,y\rangle^{2k-1}\to 1
\]
as $k\to\infty$.
This contradicts the assumption 
 that $X$ is a spherical design of 
harmonic index $2k-1$ for all positive integers $k$.

Since a regular $(2m+1)$-gon in $\mathbb{S}^1$ is known to be a spherical $2m$-design (see \cite{MR679209}), it is also a spherical $\{1,3,\dots,2m-1\}$-design. Thus, if $T$ is a finite set of positive integers with $2m-1 =\max T$, then there exists a $(2m+1)$-point spherical $T$-design which is not a union of pairs of antipodal points.

Our main result is to show that $2m+1$ is the smallest size of a finite set $X$ which can be a non-antipodal spherical $\{1,3,\dots,2m-1\}$-design. This is achieved by proving an analogous result for interval designs.

The paper is organized as follows. In Section~\ref{sec:2}, we recall Newton's identities that relate power sums and elementary symmetric polynomials. In Section~\ref{sec:3}, we introduce an analogue of harmonic indices for interval designs and prove a lower bound for the size of a \nantipodal\ design. By regarding this result as a set version, we also prove its function version (i.e., a design with weights). In Section~\ref{sec:4}, we prove our main result on a lower bound for the size of a non-antipodal spherical design with odd harmonic indices. Finally, in Section~\ref{sec:5}, we prove the optimality of our results in Sections~\ref{sec:3} and \ref{sec:4} by constructing non-antipodal designs with the smallest possible sizes.

\section{Preliminaries}\label{sec:2}

For each non-negative integer \( k\), we denote by \( p_k(x_1, \dots, x_n) \) the \( k \)-th power sum and by \( e_k(x_1, \dots, x_n) \) the \( k \)-th elementary symmetric polynomial in the variables \( x_1, \dots, x_n \). The following fact, widely known as Newton's identity, gives relations between elementary symmetric polynomials and power sums.

\begin{lem}
\label{lem:Newton}
\begin{enumerate}
\item\label{Ni1} 
For all integers \( n \ge k \ge 1 \), we have
\[
k\, e_k(x_1,\ldots,x_n) = \sum_{i=1}^k (-1)^{i-1} e_{k-i}(x_1,\ldots,x_n) \, p_i(x_1,\ldots,x_n).
\]
\item\label{Ni2} 
For all integers \( k \ge n \ge 1 \), we have
\[
0 = \sum_{i=k-n}^k (-1)^{i-1} e_{k-i}(x_1,\ldots,x_n) \, p_i(x_1,\ldots,x_n).
\]
\end{enumerate}
\end{lem}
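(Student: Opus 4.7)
The plan is the standard generating-function proof of Newton's identities. Introduce the formal power series
\[
E(t)=\prod_{j=1}^{n}(1+x_j t)=\sum_{k=0}^{n}e_k(x_1,\ldots,x_n)\,t^k,
\]
and compute its logarithmic derivative $E'(t)/E(t)$ in two ways. Writing it directly as $\sum_{j=1}^n x_j/(1+x_j t)$ and expanding each summand as a geometric series in $t$ yields $\sum_{i\ge 0}(-1)^{i}\,p_{i+1}(x_1,\ldots,x_n)\,t^{i}$. This gives the formal identity $E'(t)=E(t)\cdot\sum_{i\ge 0}(-1)^{i}\,p_{i+1}\,t^{i}$, which is the engine of the argument.

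The heart of the proof is then to compare the coefficient of $t^{k-1}$ on both sides. The left-hand side contributes $k\,e_k$, while the right-hand side, after Cauchy multiplication and the reindexing $i\mapsto i-1$ on the power-sum index, contributes $\sum_{i=1}^{k}(-1)^{i-1}\,e_{k-i}\,p_i$, with the convention that $e_j=0$ for $j>n$. This produces one identity valid for every $k\ge 1$, which then specializes to the two desired assertions: when $n\ge k\ge 1$ every term in the sum is legitimate and we read off~(\ref{Ni1}) directly, while when $k>n$ the left-hand side $k\,e_k$ vanishes and the surviving terms on the right are exactly those with $k-i\le n$, i.e.\ $i\ge k-n$, yielding~(\ref{Ni2}).

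The only obstacle is bookkeeping: tracking the sign $(-1)^{i-1}$ that arises from combining the $(-1)^i$ of the geometric expansion with the index shift $i\mapsto i-1$, and correctly identifying which elementary-symmetric terms vanish in the case $k>n$. Beyond this standard algebraic manipulation there is no conceptual difficulty, which is consistent with the lemma being stated as a preliminary.
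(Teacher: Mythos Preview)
The paper does not prove this lemma at all; it is introduced with the phrase ``The following fact, widely known as Newton's identity'' and stated without proof. Your generating-function argument is the standard one and is correct.

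One small bookkeeping point: your dichotomy handles $n\ge k\ge 1$ for part~\ref{Ni1} and $k>n$ for part~\ref{Ni2}, but the statement of~\ref{Ni2} includes the boundary case $k=n$, where the sum starts at $i=k-n=0$. This case is not covered by ``$k\,e_k$ vanishes,'' but it follows immediately from your identity at $k=n$: the extra $i=0$ term is $(-1)^{-1}e_n\,p_0=-n\,e_n$, and moving the right-hand side over gives exactly $0$. You may want to add a sentence to close this gap, but the argument is otherwise complete.
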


Since we will be dealing with spherical designs with odd harmonic indices, we establish relationships between power sums of odd degrees and elementary symmetric polynomials of odd degrees.

\begin{prop}
\label{prop:Newton1}
Let \( m \) and \( n \) be positive integers with \( 2m-1 \le n \), and let \( x_1,\ldots,x_n \in \R \). Then the following are equivalent:
\begin{enumerate}
\item\label{N1i1} 
\[
p_{2k-1}(x_1,\ldots,x_n) = 0 \quad \text{for all } k\in\{1,\dots,m\}.
\]
\item\label{N1i2}
\[
e_{2k-1}(x_1,\ldots,x_n) = 0 \quad \text{for all } k\in\{1,\dots,m\}.
\]
\end{enumerate}
\end{prop}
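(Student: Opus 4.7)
The plan is to apply Newton's identity in the form of Lemma~\ref{lem:Newton}\eqref{Ni1} with $k=2j-1$ for $j=1,\ldots,m$. Since $2j-1\le 2m-1\le n$, the identity is applicable throughout, and this is precisely where the hypothesis $2m-1\le n$ is needed (if $k>n$ occurred, one would have to invoke the second, truncated form \eqref{Ni2}, which has a different index range). After splitting the right-hand side of Newton's identity according to the parity of the summation index $i$, I expect to obtain, writing $i=2l-1$ and $i=2l$,
\[
(2j-1)\,e_{2j-1}(x_1,\ldots,x_n) = \sum_{l=1}^{j} e_{2j-2l}\,p_{2l-1} - \sum_{l=1}^{j-1} e_{2j-1-2l}\,p_{2l},
\]
where the arguments are suppressed. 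The essential observation is that in the first sum the factors $p_{2l-1}$ are of odd degree while the factors $e_{2j-2l}$ are of even degree, and conversely in the second sum the factors $e_{2j-1-2l}$ are of odd degree (with indices $2j-3,2j-5,\ldots,1$) while the factors $p_{2l}$ are of even degree. Thus odd $p$'s and odd $e$'s enter this identity on distinct sides, which is exactly what allows the two statements to be decoupled.

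From this identity I would prove both implications by induction on $j$. For \eqref{N1i1}$\Rightarrow$\eqref{N1i2}, assuming $p_{2l-1}=0$ for all relevant $l$ kills the first sum, leaving
\[
(2j-1)\,e_{2j-1} = -\sum_{l=1}^{j-1} e_{2j-1-2l}\,p_{2l},
\]
whose right-hand side involves only the odd-indexed $e$'s $e_1,e_3,\ldots,e_{2j-3}$, all vanishing by induction (the base case $j=1$ gives $e_1=p_1=0$). For \eqref{N1i2}$\Rightarrow$\eqref{N1i1}, assuming $e_{2l-1}=0$ makes the left-hand side vanish as well as every term in the second sum, leaving (using $e_0=1$)
\[
0 = p_{2j-1} + \sum_{l=1}^{j-1} e_{2j-2l}\,p_{2l-1},
\]
which expresses $p_{2j-1}$ in terms of the previously vanishing odd power sums $p_1,p_3,\ldots,p_{2j-3}$.

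There is no real obstacle beyond the parity bookkeeping: one must check that no term ever mixes an odd-degree $e$ with an odd-degree $p$, so that each induction step involves only quantities already known to vanish. A subtle point worth flagging in the write-up is to verify that the indices $2j-1-2l$ appearing in the second sum really do range over the odd values $1,3,\ldots,2j-3$, all bounded above by $2m-1$, so that the inductive hypothesis on $e_{2l-1}=0$ for $l\le m$ genuinely applies; this is where the uniform bound in the hypothesis becomes essential.
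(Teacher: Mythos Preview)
Your proof is correct and follows essentially the same approach as the paper's: both apply Lemma~\ref{lem:Newton}\ref{Ni1} with $k=2j-1$, observe that in every term $e_{2j-1-i}\,p_i$ the two indices have opposite parity, and then run an induction on $j$ for each implication. Your version is somewhat more explicit in writing out the parity split, but the argument is the same; one minor wording point is that in the direction \ref{N1i2}$\Rightarrow$\ref{N1i1} the vanishing of the $e_{2j-1-2l}$ comes directly from the standing assumption \ref{N1i2}, not from the inductive hypothesis.
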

\begin{proof}
The equivalence of \ref{N1i1} and \ref{N1i2} is clear when \( k=1 \). We prove \ref{N1i1} $\Rightarrow$ \ref{N1i2} by induction on \( m \). By Lemma~\ref{lem:Newton}~\ref{Ni1},
\[
e_{2k-1}(x_1,\ldots,x_n) = \frac{1}{2k-1} \sum_{i=1}^{2k-1} (-1)^{i-1} e_{2k-1-i}(x_1,\ldots,x_n) \, p_i(x_1,\ldots,x_n).
\]
Note that in each term \( e_{2k-1-i}\, p_i \) (for \( 1 \le i \le 2k-1 \)), the indices have opposite parity. Hence, by  \ref{N1i1} and the induction hypothesis, we obtain \( e_{2k-1}(x_1,\ldots,x_n)=0 \).

Next, we prove \ref{N1i2} $\Rightarrow$ \ref{N1i1} by induction on \( k \). By Lemma~\ref{lem:Newton}~\ref{Ni1},
\begin{align*}
p_{2k-1}(x_1,\ldots,x_n) &= 
(2k-1)e_{2k-1}(x_1,\ldots,x_n) 
\\&\quad+ \sum_{i=1}^{2k-2} (-1)^{i} e_{2k-1-i}(x_1,\ldots,x_n) \, p_i(x_1,\ldots,x_n).
\end{align*}
Again, in each term \( e_{2k-1-i}\, p_i \) (for \( 1 \le i \le 2k-2 \)), the indices have different parity. Then by \ref{N1i2} and the induction hypothesis, we have \( p_{2k-1}(x_1,\ldots,x_n)=0 \).
\end{proof}

\section{Interval Designs}\label{sec:3}

Let $(a,b)$ be an open interval in $\mathbb{R}$ and let $w(x)$ be a probability density function such that all moments $\int_a^b x^s \, w(x)dx$ are finite. Let $x_1,\ldots,x_n \in (a,b)$. In numerical analysis, an integration formula of the type
\[
\frac{1}{n}\sum_{k=1}^{n} x_k^s = \int_a^b x^s \, w(x)dx,
\]
for \( s=1,2,\ldots,t \), is called a \emph{strict Chebyshev-type quadrature of degree \( t \)} if all \( x_i \) are distinct, and a \emph{Chebyshev-type quadrature of degree \( t \)} otherwise~\cite{MR1240246}. As exemplified by the Chebyshev-Gauss quadrature
\[
\frac{1}{n}\sum_{k=1}^{n}\Bigl(\cos\Bigl(\frac{2k\pi}{2n-1}\Bigr)\Bigr)^s = \int_{-1}^{1} x^s \, \frac{1}{\pi\sqrt{1-x^2}}\,dx,
\]
where \( s \) ranges over \( 1,2,\dots,2n-1 \), Chebyshev-type formulas for the ultraspherical measure
\(
(1-x^2)^{\lambda-1/2}\,dx/ \int_{-1}^1 (1-x^2)^{\lambda-1/2}\,dx
\), where $\lambda$ is a nonnegative integer, are a central object in the study of quadrature formulas. Meanwhile, the study of the configuration of points \( x_1,\dots,x_n \) has long captured the attention of researchers in combinatorics and related areas, where the term ``interval design'' is widely used for the point configuration; for example see Bajnok~\cite{MR1129808}.

\begin{dfn}
\label{def:intt-design}
A subset \( X = \{x_1, x_2, \dots, x_n\} \) of the interval \([a,b]\) is called an \emph{interval \( t \)-design} if
\begin{equation}\label{interval_design}
\frac{1}{n}\sum_{k=1}^{n} x_k^s = \frac{1}{b-a}\int_a^b x^s\,dx
\end{equation}
holds for all \( s=1,2,\dots,t \). More generally, for \( T\subset\mathbb{N} \), the set \( X \) is called an interval design of \emph{index} \( T \) if \eqref{interval_design} holds for every \( s\in T \).

We will only consider interval designs of index \( T_m \) or interval \( T_m \)-designs, for brevity on the interval \([-1,1]\), where
\begin{equation}\label{Tm}
T_m=\{2k-1\mid 1\le k\le m\}.
\end{equation}
In particular, the definition can be stated for any measure on \([-1,1]\) by replacing the right-hand side of \eqref{interval_design} with the corresponding moment integral. 
It may also be worth mentioning that \eqref{interval_design} is equivalently written as \(p_s(x_1,\dots,x_n)=0\) for an odd positive integer \(s\).

We say that a finite subset \( X\subset[-1,1] \) is \emph{\iantipodal} if \( X = -X \). More generally, a multiset \( X\subset[-1,1] \) is said to be \emph{\iantipodal} if the multiplicity of \( x \) in \( X \) coincides with that of \( -x \) for every \( x\in X \).
\end{dfn}

\begin{prop}
\label{prop:antipodal2}
Let \( m \) and \( n \) be positive integers with \( n\le 2m \). Suppose that real numbers \( x_1,\dots,x_n\in[-1,1] \) satisfy
\begin{equation}\label{xi2k-1}
p_{2k-1}(x_1,\dots,x_n)=0
\end{equation}
for all \( k\in\{1,\dots,m\} \). Then the multiset \(\{x_1,\dots,x_n\}\) is \iantipodal. In particular, if a subset \( X\subset[-1,1] \) is an interval \( T_m \)-design with \(|X|=n\), 
then \( X \) is \iantipodal.
\end{prop}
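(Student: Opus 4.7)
The plan is to translate the \iantipodal\ condition on the multiset $\{x_1,\ldots,x_n\}$ into a vanishing condition on its elementary symmetric polynomials, and then to extract that vanishing from Proposition~\ref{prop:Newton1}. I would form the generating polynomial
\[
f(X)=\prod_{i=1}^{n}(X-x_i)=\sum_{k=0}^{n}(-1)^{k}e_{k}(x_1,\ldots,x_n)\,X^{n-k}.
\]
Because the negated multiset $\{-x_1,\ldots,-x_n\}$ has generating polynomial $(-1)^{n}f(-X)$, the multiset $\{x_1,\ldots,x_n\}$ is \iantipodal\ precisely when $f(X)=(-1)^{n}f(-X)$. Comparing coefficients, the task reduces to showing
\[
e_{2k-1}(x_1,\ldots,x_n)=0 \qquad\text{for all } k\in\{1,\ldots,\lceil n/2\rceil\}.
\]

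Next I would set $m'=\lceil n/2\rceil$. A quick case check on the parity of $n$ yields $2m'-1\le n$, while the standing assumption $n\le 2m$ gives $m'\le m$. The second inequality ensures that the hypothesis~\eqref{xi2k-1} provides $p_{2k-1}(x_1,\ldots,x_n)=0$ for every $k\in\{1,\ldots,m'\}$, and the first inequality lets me apply Proposition~\ref{prop:Newton1} with $m$ replaced by $m'$. The conclusion is exactly the vanishing of the $e_{2k-1}$ required above, so the multiset is \iantipodal.

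Finally, for the ``in particular'' assertion, if $X$ is an interval $T_m$-design of size $n$, then specialising~\eqref{interval_design} to odd exponents $s=2k-1$ gives
\[
p_{2k-1}(x_1,\ldots,x_n)=n\cdot\frac{1}{2}\int_{-1}^{1}x^{2k-1}\,dx=0
\]
for $k=1,\ldots,m$, and the multiset statement just proved forces $X=-X$ since $X$ is a set. I do not anticipate a serious obstacle: the analytic content is already packaged in Proposition~\ref{prop:Newton1}, and the only delicate bookkeeping is choosing $m'=\lceil n/2\rceil$ so that both $2m'-1\le n$ and $m'\le m$ hold, which is precisely what the hypothesis $n\le 2m$ is designed to afford.
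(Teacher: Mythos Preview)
Your argument is correct and is genuinely different from the paper's. The paper proceeds in two stages: first it extends the vanishing of $p_{2k-1}$ from $k\le m$ to \emph{all} positive integers $k$ via Lemma~\ref{lem:Newton}\ref{Ni2} (this is where the hypothesis $n\le 2m$ enters), and then it runs an induction on $n$ using a Vandermonde-type determinant $\det A=\prod_k y_k\prod_{i<j}(y_j^2-y_i^2)$ to peel off a symmetric pair $\{x_{n-1},x_n\}$ or a zero at each step. Your route bypasses both stages by observing that symmetry of the multiset is \emph{equivalent} to the vanishing of all odd elementary symmetric polynomials $e_{2k-1}$ with $2k-1\le n$, and that this vanishing is delivered directly by Proposition~\ref{prop:Newton1} once one sets $m'=\lceil n/2\rceil$; the hypothesis $n\le 2m$ is used only to guarantee $m'\le m$. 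This is shorter and conceptually cleaner. The one thing the paper's proof buys that yours does not is the intermediate fact that $p_{2k-1}(x_1,\dots,x_n)=0$ for \emph{every} positive integer $k$, though that fact is not invoked elsewhere in the paper.
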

\begin{proof}
Without loss of generality, we may assume \(0\notin X\), because removing zeros does not affect the conditions \(\sum_{x\in X} x^{2k-1}=0\) for \(1\le k\le m\).
We first prove that \eqref{xi2k-1} holds for every positive integer \( k \) by induction on \( k \). Since~\ref{xi2k-1} is assumed for \( k\le m \), so suppose \( k>m \). Then \( 2k-1\ge n \). 
Moreover, because there are only \( n \) variables, \( e_j(x_1,\dots,x_n)=0 \) for all \( j>n \); in particular, for \( j>2m \). 
It follows from Lemma~\ref{lem:Newton}~\ref{Ni2} that
\begin{align*}
p_{2k-1}(x_1,\dots,x_n) &= \sum_{i=2k-1-n}^{2k-2} (-1)^i e_{2k-1-i}(x_1,\dots,x_n) \, p_i(x_1,\dots,x_n)\\
&=\sum_{i=1}^{m} e_{2i-1}(x_1,\dots,x_n) \, p_{2(k-i)}(x_1,\dots,x_n)\\
&\quad-\sum_{i=1}^{m} e_{2i}(x_1,\dots,x_n) \, p_{2(k-i)-1}(x_1,\dots,x_n).
\end{align*}

By Proposition~\ref{prop:Newton1} and \eqref{xi2k-1}, 
we have $e_{2i-1}(x_1,\dots,x_n)=0$ for $1\leq i\leq m$.
Thus, the first term is zero.
By the inductive hypothesis,
the second term
is also zero.
Therefore, we have shown that
\eqref{xi2k-1} holds for every positive integer $k$.

We now prove the assertion by induction on $n$.
The cases $n=1,2$ are obvious.
Let $Y=\{y_1,\dots,y_{n'}\}$ be the set of all distinct elements of 
$x_1,\dots,x_n$, that is, $Y=\{x_1,\dots,x_n\}$ and
$|Y|=n'\leq n$.
Define $A\in\R^{n'\times n'}$ by
\[
A_{i,j}=y_j^{2i-1}\quad(1\leq i,j\leq n').
\]
Define a column vector $\bu$ whose $j$-th entry $u_j$ is given by
\[
u_j=|\{i\mid 1\leq i\leq n,\;x_i=y_j\}|\quad(1\leq j\leq n').
\]
Then the $k$-th entry of $A\bu$ is
\[
\sum_{j=1}^{n'} y_j^{2k-1}u_j=p_{2k-1}(x_1,\dots,x_n)=0,
\]
since we have shown that \eqref{xi2k-1} holds for every positive integer $k$.
Thus $A\bu=0$, and hence
\[
0=\det A=\prod_{k=1}^{n'}y_{k}\prod_{1\leq i<j\leq n'}(y_j^2-y_i^2).
\]
This implies that, either there exists $j$ such that $y_{j}=0$, or
there exist $i,j$ with $i<j$ such that $y_i=-y_j$.
In the latter case, we may assume without loss of generality $x_{n-1}=-x_n$.
Since we have shown that \eqref{xi2k-1} holds for every positive integer $k$,
we obtain
\[p_{2k-1}(x_1,\dots,x_{n-2})=p_{2k-1}(x_1,\dots,x_{n})=0\]
for every positive integer $k$.
By induction, the multiset $\{x_1,\dots,x_{n-2}\}$ is \iantipodal,
and so is $\{x_1,\dots,x_{n}\}$.
The former case can be proved in a similar manner.
\end{proof}
\begin{rem}
Although in this paper we restrict ourselves to interval designs for the normalized Lebesgue measure on $[-1,1]$, the notion of an interval design naturally extends to an arbitrary measure on an interval. In this broader setting, a challenging problem is the construction of interval designs with rational points and weights for the  ultraspherical measure (see Bannai et al.~\cite[p.~208, Problem~2]{BBIT2019}). It was Mishima et al.~\cite{MLSU2024} who first observed Proposition~\ref{prop:antipodal2} for $m \le 3$ and thereby established, together with explicit constructions, a classification theorem of rational $5$-designs with $6$ points for the Chebyshev measure $(1-t^2)^{-1/2}dt/\pi$.
\end{rem}

In Proposition~\ref{prop:antipodal2}, the set 
$\{x_1,\dots,x_{n}\}$ is regarded as a multiset.
This is equivalent to consider a function $\lambda$ on
$\{x_1,\dots,x_{n}\}$ taking values in the set of positive integers.
If we allow $\lambda$ to be a real-valued function, we can also conclude
$\lambda$ to be an even function under a stronger hypothesis.

\begin{dfn}\label{def:w-intt-design}
Let \( m \) be a positive integer and let \( \lambda \colon[a,b]\to\R \) be a function with finite support \( X\subset[a,b] \). The function \( \lambda \) is called a \emph{weighted interval design of index \( T \)} if
\begin{equation}\label{w_interval_design}
\sum_{x\in X} x^s\,\lambda(x)=\frac{1}{b-a}\int_a^b x^s\,dx
\end{equation}
holds for all \( s\in T \). The nonzero values \( \lambda(x) \) are called \emph{weights}.
\end{dfn}

Similar to interval designs, we will only
consider weighted interval designs of index $T_m$, or
weighted interval $T_m$-design for brevity, on the interval $[-1,1]$.

We say that a function $\lambda : [-1,1] \to \R$ with finite support  is 
\emph{\iantipodal} if $\lambda(x)=\lambda(-x)$ holds for all $x\in[-1,1]$.
The same terminology is used in~\cite[Section~6]{MR1240246} in the context of the theory of quadrature formulas.

\begin{lem}\label{lem: vec}
The set of all weighted interval designs of index $T_m$ forms a vector space $D_m$ which contains every symmetric function $\lambda : [-1,1] \to \R$ with finite support.
\end{lem}

\begin{proof}
This follows immediately from the definition.
\end{proof}

\begin{prop}\label{prop:f}
Let \( m \) and \( n \) be positive integers with \( n\le m \). Let \( \lambda \colon\R\to\R \) be a function with finite support \( X\not\ni 0 \) of size \( |X|=n \).  Suppose that \( \lambda \) is a weighted interval \( T_m \)-design. Then $n$ is even and \( \lambda \) is \iantipodal.
\end{prop}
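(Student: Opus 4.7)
The plan is to reduce the claim to a Vandermonde-type nonsingularity argument by passing to the antisymmetric part of $f$.

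First, I would rewrite the hypothesis: on $[-1,1]$, the integrals $\int_{-1}^{1} x^{2k-1}\,dx$ all vanish, so the weighted interval $T_m$-design condition becomes $\sum_{x\in X} x^{2k-1}\, f(x) = 0$ for $k=1,\dots,m$.

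Next, I would introduce the antisymmetric function $g(x) = f(x) - f(-x)$, so that $g(-x) = -g(x)$, $g(0) = 0$, and $\supp g \subseteq X \cup (-X)$. Writing $\supp g = \{\pm z_1,\dots,\pm z_r\}$ with positive, pairwise distinct $z_i$, one has $2r \le |X \cup (-X)| \le 2n$, hence $r \le n$. A direct substitution using antisymmetry gives
\[
\sum_{x\in\R} x^{2k-1}\, g(x) = 2\sum_{i=1}^{r} z_i^{2k-1}\, g(z_i),
\]
and the left-hand side vanishes for $k=1,\dots,m$ because each of the two sums $\sum_x x^{2k-1} f(x)$ and $\sum_x x^{2k-1} f(-x)$ vanishes by the preceding reduction (the second one upon the substitution $x\mapsto -x$ and using that $2k-1$ is odd).

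For the final step, since $r \le n \le m$, the first $r$ equations $\sum_{i=1}^r z_i^{2k-1} g(z_i) = 0$ form an $r\times r$ linear system whose coefficient matrix has determinant $\bigl(\prod_{i=1}^{r} z_i\bigr)\prod_{1\le i<j\le r}(z_j^2 - z_i^2)$, which is nonzero because the $z_i$ are positive and pairwise distinct. Hence $g(z_i)=0$ for every $i$, contradicting the choice of the $z_i$; therefore $g\equiv 0$ and $f$ is \iantipodal. I do not expect a serious obstacle here: the argument is strictly easier than the proof of Proposition~\ref{prop:antipodal2}, since the stronger hypothesis $n\le m$ (rather than $n\le 2m$) immediately furnishes enough odd-power equations to make the Vandermonde-like matrix for the antisymmetric part of $f$ square and nonsingular, obviating any Newton-identity bootstrapping.
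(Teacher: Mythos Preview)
Your proof is correct and takes a genuinely different, more direct route than the paper's.

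The paper argues by induction on $n$: after discarding $0$ from $X$, it forms the $n\times n$ matrix $A_{i,j}=x_j^{2i-1}$, observes that the vector $v=(f(x_j))_j$ lies in its kernel, hence $\det A=0$, which forces some $x_j=-x_i$; it then modifies $f$ at this antipodal pair to obtain a new function $f'$ with strictly smaller support still satisfying the odd-moment conditions, and applies the inductive hypothesis to $f'$ before unwinding.

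Your approach bypasses the induction entirely by passing at once to the antisymmetric part $g(x)=f(x)-f(-x)$. The key observation that $\supp g\subseteq X\cup(-X)$ has at most $2n$ elements, and hence at most $r\le n\le m$ positive representatives $z_1,\dots,z_r$, lets you apply the Vandermonde nonsingularity in a single stroke and conclude $g\equiv 0$. What this buys is brevity and transparency: there is no recursive bookkeeping, and one sees immediately why the hypothesis $n\le m$ suffices (it guarantees enough odd moments to make the $r\times r$ system determined). The paper's inductive argument, by contrast, mirrors more closely the structure of the proof of Proposition~\ref{prop:antipodal2}, which is natural given the order of exposition but is heavier than necessary here.

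One minor stylistic point: the phrase ``contradicting the choice of the $z_i$'' presupposes $r\ge 1$; it would read more cleanly to say that the nonsingularity forces $g(z_i)=0$ for all $i$, whence $\supp g=\emptyset$ and $g\equiv 0$. But this is cosmetic; the mathematics is sound.
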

\begin{proof}

We prove the assertion by induction on $n$.
If $n=1$, then it is clear that there is no such 
weighted interval $T_m$-design for $m\geq1$. 
Hence, we may suppose $n\ge 2$.
Let $X=\{x_1,\dots,x_n\}$,
and define $A\in\R^{n\times n}$ by
\[A_{i,j}=x_j^{2i-1}\quad(1\leq i,j\leq n).\]
Define $v\in\R^n$ by 
\[v_j=\lambda(x_j)\quad(1\leq j\leq n).\]
Then for $1\leq i\leq n$,
\begin{align*}
(Av)_i&=
\sum_{j=1}^n x_j^{2i-1} \lambda(x_j) \\
&=
\sum_{x\in\R} x^{2i-1} \lambda(x)
\\&=
0.
\end{align*}
This implies
\[
0=\det A =
\prod_{k=1}^n x_{k}\prod_{1\leq i<j\leq n}(x_j^2-x_i^2).
\]
Thus, 
there exist $i,j$ with $1\leq i<j\leq n$ such that $x_j=-x_i$.
We may assume without loss of generality $x_n=-x_{n-1}$.

Suppose $n=2$. 
Then 
$x_2=-x_1$, and hence $x_1(\lambda(x_1)-\lambda(x_2))=0$
which implies
$\lambda(x_1)=\lambda(-x_1)$. Since the support of $\lambda$ is $\{x_1,x_2\}=\{x_1,-x_1\}$, 
we conclude $\lambda(x)=\lambda(-x)$ for all $x\in\R$.

Now suppose $n\geq3$.
Define a function $\lambda' :=\lambda-\omega$, where
\[
\omega(x)=
\begin{cases}
\lambda(x_{n})&\text{if $x=x_{n}$,}\\
-\lambda(x_{n})&\text{if $x=-x_{n}$,}\\
0&\text{otherwise.}\\
\end{cases}
\]
Then $\omega$ is symmetric with finite support, and the support of $\lambda'$ has size either $n-1$ or $n-2$.
Since $n \le m$, we have $\lambda \in D_m \subset D_{n-1}$. By Lemma~\ref{lem: vec}, it follows that $\omega \in D_{n-1}$, and $\lambda'=\lambda-\omega \in D_{n-1}$. Then, the inductive hypothesis implies $\lambda'$ is symmetric. Since $\lambda=\lambda'+\omega$, $\lambda$ is also symmetric. In particular, $n$ is even.
\end{proof} 

\section{Spherical Designs}\label{sec:4}

For \( a,b\in\R^d \), we denote by \( \langle a,b\rangle \) the standard inner product.

\begin{dfn}
\label{def:spht-design}
For a positive integer \( t \), a finite nonempty subset \( X\subset\mathbb{S}^{d-1} \) is called a \emph{spherical \( t \)-design} if
\[
\sum_{x\in X} \langle x,a\rangle^k =
\begin{cases}
\displaystyle \frac{1\cdot3\cdots(2k-1) \langle a,a\rangle^{\frac{k}{2}} }{(d+2)(d+4)\cdots(d+2k-2) }\, & \text{if } k\in\{2,4,\ldots, 2 \lfloor \frac{t}{2} \rfloor \},\\[2mm]
0, & \text{if } k\in\{1,3,\ldots,2 \lfloor \frac{t-1}{2} \rfloor+1 \},
\end{cases}
\]
for all \( a\in\R^d \).
\end{dfn}

Among various definitions of spherical designs
(See \cite[Theorem 2.2]{MR2535394}),
the above definition, which originally goes back to Hilbert's solution of Waring problem, can be found in Lyubich and Vaserstein~\cite{MR1235223}, Reznick~\cite{MR1096187}.
Venkov~\cite[Section 3]{MR1881618} also shows that the above definition is equivalent to the standard one in Delsarte, Goethals and Seidel ~\cite{MR485471}.
In fact, denote by $|\mathbb{S}^{d-1}|$ the surface area $\int_{\mathbb{S}^{d-1}}1\,d\rho$
of the sphere $\mathbb{S}^{d-1}$.
It follows, by the $O(d)$-invariance of the uniform measure $\rho$ on $\mathbb{S}^{d-1}$, that
\begin{align*}
&\frac{1}{|\mathbb{S}^{d-1}|} \int_{\mathbb{S}^{d-1}} \langle y,a \rangle^{2k} d\rho
=
\frac{1}{|\mathbb{S}^{d-1}|} \int_{\mathbb{S}^{d-1}} \langle y,\frac{a}{\|a\|} \rangle^{2k} \langle a,a \rangle^k d\rho
\\
&=
\frac{\langle a,a \rangle^k}{|\mathbb{S}^{d-1}|} \int_{\mathbb{S}^{d-1}} y_1^{2k} d\rho 
=
\frac{1 \cdot 3 \cdots (2k-1)}{d(d+2)\cdots(d+2k-2)} \langle a,a \rangle^k,
\end{align*}
which provides an interpretation of the right-hand side of the equation in 
Definition~\ref{def:spht-design} in an analytic manner. 
For a brief explanation on the relation between spherical designs and Hilbert's work, we refer the reader to Seidel~\cite{MR1313290}.

The following more general concept of spherical designs was
introduced by \cite{MR3339051}, and in its full generality by 
\cite{MR3004471}.
See also \cite{MR4662467,MR4080490,MR3650263}.

\begin{dfn}
\label{def:sphT-design}
Let \( Q_{d,t}(s) \) denote the Gegenbauer polynomial of degree \( t \) on \(\mathbb{S}^{d-1}\) (see \cite[Lemma 2.1]{MR3339051}). Let \( T \) be a set of positive integers. A finite nonempty subset \( X\subset\mathbb{S}^{d-1} \) is called a spherical design of \emph{harmonic index} \( T \) if
\[
\sum_{x,y\in X} Q_{d,t}(\langle x,y\rangle)=0 \quad \text{for all } t\in T.
\]
\end{dfn}

We shall only consider spherical designs of harmonic index \( T_m \) (or spherical \( T_m \)-designs), where \( T_m \) is defined in \eqref{Tm}. It is clear that any pair of antipodal points forms a spherical \( T_m \)-design for every positive integer \( m \). Thus, if \( X \) is a spherical \( T_m \)-design containing such a pair \( \{a,-a\} \), then \( X\setminus\{a,-a\} \) is also a spherical \( T_m \)-design.

\begin{lem}
\label{lem:sphTm-design}
Let \( X \) be a finite subset of \(\mathbb{S}^{d-1}\). Then \( X \) is a spherical \( T_m \)-design if and only if $\{\langle x,a \rangle\}_{x\in X}$ is an interval $T_m$-design for all $a \in \mathbb{S}^{d-1}$.

\end{lem}
\begin{proof}
Considering the univariate polynomial ring \( \R[s] \), note that the span of the monomials \( s^t \) for \( t\in T_m \) coincides with that of the Gegenbauer polynomials \( Q_{d,t}(s) \) (since \( Q_{d,t}(s) \) is odd of degree \( t \) when \( t\in T_m \)). This equivalence establishes the claim.
\end{proof}

\begin{thm}
\label{thm:sph_main}
Let \( X \) be a spherical \( T_m \)-design consisting of \( n \) points. If \( n\le 2m \), then \( X \) is antipodal.
\end{thm}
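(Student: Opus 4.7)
The natural strategy is to reduce Theorem~\ref{thm:sph_main} to its one-dimensional counterpart, Proposition~\ref{prop:antipodal2}, by slicing $X$ with every linear functional on $\R^d$. Write $X=\{x_1,\dots,x_n\}$.

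First, by Lemma~\ref{lem:sphTm-design}, the spherical $T_m$-design condition says precisely that
\[
p_{2k-1}\bigl(\langle x_1,a\rangle,\dots,\langle x_n,a\rangle\bigr)=0
\]
for every $a\in\R^d$ and every $k\in\{1,\dots,m\}$. Taking $\|a\|=1$, we have $\langle x_i,a\rangle\in[-1,1]$ by Cauchy--Schwarz, and since $n\le 2m$, Proposition~\ref{prop:antipodal2} forces the multiset $\{\langle x_i,a\rangle\}_{i=1}^n$ to be \iantipodal. Equivalently,
\[
\{\langle x_i,a\rangle\}_{i=1}^n=\{\langle -x_i,a\rangle\}_{i=1}^n
\]
as multisets, for every unit vector $a\in\R^d$.

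Second, I would upgrade this pointwise multiset equality into the set equality $X=-X$ by a genericity argument. Choose $a$ such that the linear functional $y\mapsto\langle y,a\rangle$ is injective on the finite set $X\cup(-X)$. The complement of the set of such $a$ lies in the union of the finitely many hyperplanes $(y-z)^\perp$ over distinct $y,z\in X\cup(-X)$, which has empty interior, so a suitable $a$ exists (and may be taken of unit norm). For this $a$, the multiset equality above lifts through the injective linear functional to the multiset equality $\{x_i\}_{i=1}^n=\{-x_i\}_{i=1}^n$ in $\R^d$, which is exactly the antipodality of $X$. The main technical subtlety is this genericity step; the rescaling required to apply Proposition~\ref{prop:antipodal2} within $[-1,1]$ is harmless, since the relations $p_{2k-1}=0$ are homogeneous of odd degree in the inputs.
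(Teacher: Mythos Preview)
Your argument is correct and follows the same overall scheme as the paper: project $X$ onto a line and invoke Proposition~\ref{prop:antipodal2}. The only difference is in the choice of direction. The paper argues by contradiction: if $X$ is not antipodal, pick $a\in X$ with $-a\notin X$; then $\langle a,a\rangle=1$ lies in the projected multiset while $-1$ does not (every other $y\in X$ satisfies $|\langle y,a\rangle|<1$), so the projection is visibly non-\iantipodal, contradicting Proposition~\ref{prop:antipodal2}. This avoids your genericity step entirely. Your route---choosing $a$ so that $\langle\cdot,a\rangle$ separates $X\cup(-X)$---is also valid and has the minor virtue of proving $X=-X$ directly rather than by contradiction, at the cost of the hyperplane-avoidance argument.
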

\begin{proof}
Suppose \( X \) is not antipodal. Then there exists some \( a\in X \) such that \( -a\notin X \). 
The projection of \( X \) onto the line determined by \( a \) yields a multiset of \( n \) real numbers that is non-symmetric. By Proposition~\ref{prop:antipodal2}, such a configuration forces \( n>2m \), contradicting the assumption.
\end{proof}

\section{Optimality}\label{sec:5}

In this section, we show that the inequalities in the assumptions of
Propositions~\ref{prop:antipodal2}, \ref{prop:f} and
Theorem~\ref{thm:sph_main} cannot be weakened.
First, we show that the inequality $n\le2m$ in 
Proposition~\ref{prop:antipodal2} cannot be weakened.
Recall that we denote by \( p_k \) and \( e_k \) 
the \( k \)-th power sum and $k$-th elementary symmetric polynomial, respectively
with $n$ variables.
For a finite set $A=\{a_1,\dots,a_n\}$ with $n$ elements, we write
$e_k(a_1,\dots,a_n)=e_k(A)$ and $p_k(a_1,\dots,a_n)=p_k(A)$.

\begin{prop}\label{prop:2m+1}
Let $m$ be a positive integer. Then for every positive integer $n$ with
$n>2m$, there exists an interval $T_m$-design $X\subset[-1,1]$ with 
$|X|=n$ and $X\neq-X$. In particular, $2m+1$ is the smallest size
of a \nantipodal\ interval $T_m$-design.
\end{prop}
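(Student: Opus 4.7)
The plan is to construct \nantipodal\ interval $T_m$-designs of every size $n>2m$ by treating $n=2m+1$ directly via a regular-polygon projection and extending to larger $n$ by adjoining antipodal pairs. As recalled in the introduction, the regular $(2m+1)$-gon on $\mathbb{S}^1$ is a spherical $2m$-design, so projecting its vertices onto the first coordinate axis gives, for every real $\phi$ and every odd $s\in\{1,3,\dots,2m-1\}$,
\[
\sum_{k=0}^{2m}\bigl(\cos(\phi+2\pi k/(2m+1))\bigr)^{s}=0.
\]
Hence $X_0(\phi):=\{\cos(\phi+2\pi k/(2m+1))\mid 0\le k\le 2m\}$ is an interval $T_m$-design whenever its elements are distinct. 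I would choose $\phi$ so that (a) the $2m+1$ cosines are pairwise distinct, (b) $X_0(\phi)\neq -X_0(\phi)$, and (c) $0\notin X_0(\phi)$ hold simultaneously; each of these excludes only finitely many $\phi$ modulo $\pi$, so a valid $\phi$ exists and produces a \nantipodal\ interval $T_m$-design of size $2m+1$.

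For $n>2m+1$, write $n-(2m+1)=2q+r$ with $r\in\{0,1\}$ and pick generic $a_1,\dots,a_q\in(0,1)$ such that $\pm a_1,\dots,\pm a_q$ are pairwise distinct and disjoint from $X_0(\phi)$. Let $Y$ be the collection of these $2q$ points, augmented by $0$ if $r=1$ (which lies outside $X_0(\phi)$ by condition (c)). Since every antipodal pair and the point $0$ contribute zero to every odd power sum, $X:=X_0(\phi)\cup Y$ is an interval $T_m$-design of size $n$. To confirm $X\neq -X$, assume otherwise: by condition (b) some $a\in X_0(\phi)$ satisfies $-a\notin X_0(\phi)$, whence $-a\in X\setminus X_0(\phi)=Y$ and so $a\in -Y=Y$, contradicting $Y\cap X_0(\phi)=\emptyset$.

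The only nontrivial step is verifying that conditions (a)--(c) are simultaneously satisfiable. Each is a finite conjunction of inequalities of the form $\cos(\phi+\alpha)\neq \pm\cos(\phi+\beta)$ with $\alpha,\beta$ determined by the gon, and each corresponding equality restricts $\phi$ to finitely many residues modulo $\pi$. Hence the union of all bad $\phi$ is finite in $[0,\pi)$, and any $\phi$ outside it works. Combined with Proposition~\ref{prop:antipodal2}, which forbids \nantipodal\ interval $T_m$-designs of size at most $2m$, this establishes that $2m+1$ is the smallest possible size.
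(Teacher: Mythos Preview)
Your proof is correct and takes a more direct route than the paper's. For the base case $n=2m+1$, the paper proceeds algebraically: starting from the symmetric set $A=\{\pm(2k-1)/(2m):1\le k\le m\}$, it perturbs the monic polynomial $f$ with root set $A$ to $g=f+\varepsilon$ (leaving $e_1,\dots,e_{2m-1}$ and hence, via Newton's identities, $p_1,\dots,p_{2m-1}$ unchanged), obtains a new symmetric root set $A'$, and then shows through a binomial computation that $X=(A'-\tfrac{1}{2m})\cup\{1\}$ is a \nantipodal\ interval $T_m$-design. Your polygon projection with a generic phase is shorter and leans on a fact the paper already invokes elsewhere (that the regular $(2m+1)$-gon is a spherical $2m$-design); the trade-off is that the paper's construction stays entirely within the interval setting and does not appeal to spherical designs. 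One small simplification available to you: once (a) holds, $|X_0(\phi)|=2m+1$ is odd, so condition (c) already forces (b). The extension to larger $n$ by adjoining antipodal pairs and possibly $0$ is essentially the same in both arguments; the paper streamlines this step by first observing, via Proposition~\ref{prop:antipodal2}, that any \nantipodal\ $(2m+1)$-point $T_m$-design automatically satisfies $X\cap(-X)=\emptyset$, which is exactly what your conditions (b) and (c) arrange by hand.
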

\begin{proof}
It suffices to prove the case where $n=2m+1$.
Indeed, a set $X$ with $|X|=2m+1$ satisfying the conclusion must
satisfy the stronger property $X\cap(-X)=\emptyset$, by
Proposition~\ref{prop:antipodal2}. Thus, the case
$n=2m+2$ follows by adding $0$ to $X$. For $n>2m+2$,
we can construct a desired $T_m$-design by adding
$\left\lfloor{(n-2m)/2}\right\rfloor$ pairs of antipodal points
in $(-1,1)$ to $X$ with $|X|=2m+1$ or $2m+2$.

Let
\[A=\left\{ \pm \frac{2k-1}{2m} \mid 1\leq k\leq m\right\}.\]
Let $f(x)$ be the monic polynomial of degree $2m$ having
$A$ as its set of roots.
We can choose a small $\varepsilon>0$ in such a way that
the polynomial $g(x)=f(x)+\varepsilon$ has $2m$ roots
in $(-1+\frac{1}{2m},1-\frac{1}{2m})\setminus\{\frac{1}{2m}\}$.
Let $A'$ be the set of roots of $g(x)$. Then, {by the relation between roots and coefficients,}
\[
  e_k(A)=e_k(A') \quad (0\le k\le 2m-1).
\]
By Lemma~\ref{lem:Newton} using induction on $k$, we find
\begin{equation}\label{eq:AA'}
p_k(A)=p_k(A')\quad(0\leq k\leq 2m-1).
\end{equation}

Since $A$ and $(A-\tfrac{1}{2m})\cup\{1\}$ are both \iantipodal,
$p_{\mathrm{odd}}(A)=0$, and
\begin{align*}
0&=p_{2k+1}((A-\frac{1}{2m})\cup\{1\})
\nexteq
1+\sum_{a\in A}(a-\frac{1}{2m})^{2k+1}
\nexteq
1+\sum_{a\in A}\sum_{\ell=0}^{2k+1}
\binom{2k+1}{\ell}a^\ell(-\frac{1}{2m})^{2k+1-\ell}
\nexteq
1+\sum_{\ell=0}^{2k+1}
\binom{2k+1}{\ell}(-\frac{1}{2m})^{2k+1-\ell}
p_\ell(A)
\nexteq
1+\sum_{\ell=0}^{k}
\binom{2k+1}{2\ell}(-\frac{1}{2m})^{2(k-\ell)+1}
p_{2\ell}(A)
\nexteq
1-\sum_{\ell=0}^{k}
\binom{2k+1}{2\ell}\frac{1}{(2m)^{2(k-\ell)+1}}
p_{2\ell}(A).
\end{align*}
Thus, by \eqref{eq:AA'}, we obtain
\[1-\sum_{\ell=0}^{k}
\binom{2k+1}{2\ell}\frac{1}{(2m)^{2(k-\ell)+1}}
p_{2\ell}(A')=0\quad(0\leq k\leq m-1).
\]
Since $A'$ is also \iantipodal, we can 
reverse the above calculation with $A$ replaced by $A'$, and we
conclude
\[p_{2k+1}((A'-\frac{1}{2m})\cup\{1\})=0
\quad(0\leq k\leq m-1).\]
This implies that the set
$X=(A'-\frac{1}{2m})\cup\{1\}$ is an interval $T_m$-design
with $|X|=2m+1$. Since $1\in X$ and $-1\notin X$, we have
$X\neq(-X)$. 

The second assertion then follows from Proposition~\ref{prop:antipodal2}.
\end{proof}

{Next, we show that the inequalities in the assumptions of
Proposition~\ref{prop:f} cannot be weakened.}
The following {two} propositions {show} that $m+1$ is the smallest size of the support
of a \nantipodal\ weighted $T_m$-design. 

\begin{prop}\label{prop:f_opt+}
Let {$n$} be a positive integer.
Define the function $\lambda\colon[-1,1]\to\R$ by
\[
\lambda(x)=
\begin{cases}
2, & \text{if } x = \cos\Bigl(\dfrac{2j\pi}{2{n}+1}\Bigr) \text{ for } 1\le j\le {n}, \\[1mm]
1, & \text{if } x = 1, \\[1mm]
0, & \text{otherwise.}
\end{cases}
\]
Then, $\lambda$ is a weighted interval $T_{{n}-1}$-design.

 
\end{prop}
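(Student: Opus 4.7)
The plan is to unwind the definition and reduce the problem to a standard sum over roots of unity. On the interval $[-1,1]$, the right-hand side of \eqref{w_interval_design} vanishes for every odd $s$, so $f$ being a weighted interval $T_{n-1}$-design is equivalent to the statement
\[
\sum_{x\in\R} x^s f(x)=0\quad\text{for every } s\in\{1,3,\dots,2n-3\}.
\]
Once the problem is reduced to this form, the task becomes an elementary roots-of-unity computation.

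Next, I would exploit the symmetry $\cos(2j\pi/(2n+1))=\cos(2(2n+1-j)\pi/(2n+1))$, which pairs the index $j$ with $2n+1-j$ and hence shows that
\[
2\sum_{j=1}^{n}\cos\left(\frac{2j\pi}{2n+1}\right)^{s}
=\sum_{j=1}^{2n}\cos\left(\frac{2j\pi}{2n+1}\right)^{s}.
\]
Adding the weight-$1$ contribution from $x=1=\cos(0)$ extends the sum to all $j\in\{0,1,\dots,2n\}$, so
\[
\sum_{x\in\R} x^s f(x)
=\sum_{j=0}^{2n}\cos\left(\frac{2j\pi}{2n+1}\right)^{s}.
\]

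The key step is to evaluate this cyclic sum. Setting $\omega=e^{2\pi i/(2n+1)}$ and writing $\cos(2j\pi/(2n+1))=(\omega^{j}+\omega^{-j})/2$, the binomial theorem together with interchanging the order of summation gives
\[
\sum_{j=0}^{2n}\cos\left(\frac{2j\pi}{2n+1}\right)^{s}
=2^{-s}\sum_{k=0}^{s}\binom{s}{k}\sum_{j=0}^{2n}\omega^{j(2k-s)}.
\]
By orthogonality, the inner sum is $2n+1$ when $2k-s\equiv 0\pmod{2n+1}$ and $0$ otherwise. For $s$ odd, the exponent $2k-s$ is always odd and hence nonzero; moreover, the hypothesis $s\le 2n-3$ forces $|2k-s|\le s\le 2n-3<2n+1$, which rules out any nonzero multiple of $2n+1$. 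Thus every inner sum vanishes, and we conclude $\sum_{x}x^{s}f(x)=0$, as required.

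There is no real obstacle in this argument; the only point to watch is the bookkeeping at the boundary, namely that the bound $s\le 2n-3$ (and not $s\le 2n-1$) is exactly what is needed to keep the exponents $2k-s$ within the range $(-2n-1,\,2n+1)$, so that the only way the inner sum can be nonzero is $2k-s=0$, which is excluded by the parity of $s$.
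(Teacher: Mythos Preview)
Your proof is correct and follows essentially the same route as the paper's: both use the symmetry $\cos(2j\pi/(2n+1))=\cos(2(2n+1-j)\pi/(2n+1))$ together with the weight-$1$ term at $x=1$ to rewrite $\sum_{x}x^{s}f(x)$ as the full cyclic sum $\sum_{j=0}^{2n}\cos^{s}(2j\pi/(2n+1))$. The only difference is in how this last sum is shown to vanish: the paper simply cites the fact that the regular $(2n+1)$-gon is a spherical $2n$-design, whereas you carry out the roots-of-unity computation explicitly. Your version is self-contained; the paper's is shorter.

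One small inaccuracy in your closing commentary: the bound $s\le 2n-3$ is \emph{not} ``exactly what is needed''. Your orthogonality argument actually goes through for every odd $s$ with $s\le 2n-1$, since $|2k-s|\le 2n-1<2n+1$ still holds. This is consistent with the stronger fact (used by the paper) that the $(2n+1)$-gon is a spherical $2n$-design, hence already a $T_n$-design; the proposition only records the $T_{n-1}$ part of this.
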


\begin{proof}
For $1\le l \le n$,
\begin{align*}
\sum_{x\in\supp(f)} x^{2l-1}\lambda(x) 
&= \sum_{j=1}^{n} 2\cos^{2l-1}\Bigl(\frac{2j\pi}{2n+1}\Bigr)+ 1    \\
&= \sum_{j=0}^{2n}\cos^{2l-1}\Bigl(\frac{2j\pi}{2n+1}\Bigr)=0,
\end{align*}
since the vertices of the regular $(2n+1)$-gon form a spherical $2n$-design.
\end{proof}

In this way, we always obtain a function \( \lambda \) with support size \( n \) that is a weighted interval \( T_{n-1} \)-design. This shows that the inequality \( n\leq m \) in the assumption of Proposition~\ref{prop:f} cannot be relaxed. 

Using a combinatorial lemma, we can also construct a non-symmetric weighted \(T_{n-1}\)-design on \([-1,1]\) supported on \(n\) rational nodes with rational weights. In its proof, we
denote by \( \delta_{ij} \) the Kronecker's delta symbol.

\begin{lem}\label{lem:binom}
For positive integers $n$ and $s$ with $0\leq s<n$, we have
\[\sum_{j=1}^n (-1)^j j^{2s}\binom{2n}{n-j}=
\begin{cases}
-\binom{2n-1}{n-1}&\text{if $s=0$,}\\
0&\text{if $1\leq s<n$.}
\end{cases}
\]
\end{lem}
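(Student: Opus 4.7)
The plan is to extend the sum over $j$ from the half-range $\{1,\dots,n\}$ to the full symmetric range $\{-n,\dots,n\}$ and then recognize the resulting expression as a top-order finite difference of a polynomial of strictly smaller degree. The summand $(-1)^j j^{2s}\binom{2n}{n-j}$ is an even function of $j$: the factors $(-1)^j$ and $j^{2s}$ are trivially invariant under $j\mapsto -j$, and $\binom{2n}{n-j}=\binom{2n}{n+j}$ by the symmetry of binomial coefficients. Hence
\[
\sum_{j=1}^n (-1)^j j^{2s}\binom{2n}{n-j}
=\tfrac12\sum_{j=-n}^n (-1)^j j^{2s}\binom{2n}{n-j}
-\tfrac12\,\delta_{s,0}\binom{2n}{n},
\]
where the correction term compensates for the $j=0$ contribution, which contributes only when $s=0$.

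Substituting $k=n-j$ would convert the symmetric sum into $(-1)^n\sum_{k=0}^{2n}(-1)^{k}(n-k)^{2s}\binom{2n}{k}$. This is, up to sign, the value at $k=0$ of the $(2n)$-th iterated forward difference $\Delta^{2n}$ applied to the polynomial $P(k)=(n-k)^{2s}$, of degree $2s$ in $k$. Because $\Delta^{N}$ annihilates every polynomial of degree less than $N$, and because the hypothesis $s<n$ gives $\deg P=2s<2n$, the symmetric sum equals $0$. For $1\le s<n$ this immediately yields the desired vanishing.

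For $s=0$ the same argument still shows the symmetric sum is zero (the underlying identity is now just $\sum_{k=0}^{2n}(-1)^{k}\binom{2n}{k}=0$), and then the correction term gives
\[
\sum_{j=1}^n (-1)^j\binom{2n}{n-j}=-\tfrac12\binom{2n}{n}=-\binom{2n-1}{n-1},
\]
the last equality following from $\binom{2n}{n}=\binom{2n-1}{n-1}+\binom{2n-1}{n}=2\binom{2n-1}{n-1}$ via Pascal's rule. The only genuine obstacle is bookkeeping the sign coming from $(-1)^{n-k}=(-1)^n(-1)^k$ in the substitution step and ensuring the $j=0$ correction is inserted only when $s=0$; the substantive ingredient, reducing the half-sum to a finite-difference identity on a low-degree polynomial, is entirely standard.
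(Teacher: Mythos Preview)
Your proof is correct. The symmetrization step, the $j=0$ correction when $s=0$, the substitution $k=n-j$, and the finite-difference argument are all handled properly; since $\deg\bigl((n-k)^{2s}\bigr)=2s<2n$, the $2n$-th iterated difference vanishes and the result follows.

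Your route differs from the paper's. For $s=0$ the two arguments are essentially the same (both extend to the full range and invoke $\sum_{k=0}^{2n}(-1)^k\binom{2n}{k}=0$). For $s\ge1$, however, the paper proceeds by induction on $s$, using the identity
\[
j^{2}\binom{2n}{n-j}=n^{2}\binom{2n}{n-j}-2n(2n-1)\binom{2n-2}{n-1-j}
\]
to peel off a factor of $j^2$ and reduce to the $(s-1)$-case for both $n$ and $n-1$. Your approach bypasses this induction entirely by recognizing the full symmetric sum as a top-order finite difference of a polynomial of lower degree, which is a standard one-line fact. This is shorter and more conceptual; the paper's inductive argument, by contrast, is fully self-contained and does not rely on the reader knowing the finite-difference annihilation property.
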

\begin{proof}
We prove the assertion by induction on $s$.

Suppose $s=0$, then for all $n>0$ we have:
\begin{align*}
\sum_{j=1}^n (-1)^j \binom{2n}{n-j}&=
\frac12\sum_{j=1}^n (-1)^j\left(\binom{2n}{n-j}+\binom{2n}{n+j}\right)
\nexteq
\frac{(-1)^n}{2}\sum_{j=0}^{2n}(-1)^j\binom{2n}{j}-\frac12\binom{2n}{n}
\nexteq
-\binom{2n-1}{n-1}.
\end{align*}

Now let $s\geq1$, and assume that the assertion holds for $s-1$ and  for all $n >s-1$. Since
\begin{align*}
j^2\binom{2n}{n-j}
&=n^2\binom{2n}{n-j}-2n(2n-1)\binom{2n-2}{n-1-j}
\quad(1\leq j\leq n-1),
\end{align*}
we have for $n>s$,
\begin{align*}
& \sum_{j=1}^{n} (-1)^j j^{2s}\binom{2n}{n-j} \\
& =(-1)^{n}n^{2s}+ \sum_{j=1}^{n-1} (-1)^j j^{2s}\binom{2n}{n-j} \nexteq (-1)^{n}n^{2s}+ \sum_{j=1}^{n-1} (-1)^j j^{2s-2} n^2\binom{2n}{n-j} \\
& \quad-2n(2n-1)\sum_{j=1}^{n-1} (-1)^j j^{2s-2}\binom{2n-2}{n-1-j} \nexteq
n^2\sum_{j=1}^{n} (-1)^j j^{2(s-1)} \binom{2n}{n-j} -2n(2n-1)\sum_{j=1}^{n-1} (-1)^j j^{2(s-1)}\binom{2(n-1)}{n-1-j}.
\end{align*}
By the inductive hypothesis, we have
\begin{align*}
\sum_{j=1}^{n} (-1)^j j^{2s}\binom{2n}{n-j}
&=-n^2\delta_{s,1}\binom{2n-1}{n-1}
+2n(2n-1)\delta_{s,1}\binom{2n-3}{n-2}=0.
\end{align*}
\end{proof}


{
\begin{prop}\label{prop:f_opt2+}
Let $n$ be a positive integer.
Define $\lambda\colon[-1,1]\to\R$ by
\[\lambda(x)=\begin{cases}
2j\binom{2n}{n-2j}&
\text{if $x=\frac{2j}{n+1}$ for $1\leq j\leq \left\lfloor n/2\right\rfloor$,}\\
(2j-1)\binom{2n}{n-2j+1}&
\text{if $x=-\frac{2j-1}{n+1}$ for $1\leq j\leq \left\lceil n/2\right\rceil$,}\\
0&\text{otherwise.}
\end{cases}
\]
Then $\lambda$ is a weighted interval $T_{n-1}$-design.
\end{prop}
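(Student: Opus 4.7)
The plan is to unify the two cases in the piecewise definition of $f$ by re-indexing the support with a single index $i\in\{1,\dots,n\}$, after which the verification reduces directly to Lemma~\ref{lem:binom}.

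First I would set $x_i = (-1)^i\dfrac{i}{n+1}$ for $1\le i\le n$. When $i=2j$ is even with $1\le j\le \lfloor n/2\rfloor$, this gives $x_i = \dfrac{2j}{n+1}$; when $i=2j-1$ is odd with $1\le j\le \lceil n/2\rceil$, this gives $x_i = -\dfrac{2j-1}{n+1}$. Comparing with the definition of $f$, in both cases the corresponding weight is $f(x_i)=i\binom{2n}{n-i}$. Thus $\supp f = \{x_1,\dots,x_n\}\subset(-1,1)$ (since $i<n+1$), and these $n$ points are distinct.

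Next, for each $s$ with $1\le s\le n-1$ (so that $2s-1\in T_{n-1}$), I would directly compute, using $(-1)^{i(2s-1)} = (-1)^i$,
\begin{align*}
\sum_{x\in\supp f} x^{2s-1} f(x)
&= \sum_{i=1}^{n}\left((-1)^i\frac{i}{n+1}\right)^{2s-1}\!\cdot i\binom{2n}{n-i} \\
&= \frac{1}{(n+1)^{2s-1}}\sum_{i=1}^{n}(-1)^i\, i^{2s}\binom{2n}{n-i}.
\end{align*}
Since $1\le s<n$, Lemma~\ref{lem:binom} shows that the inner sum vanishes. Because $\int_{-1}^{1} x^{2s-1}\,dx = 0$ for odd exponents as well, this establishes the design condition in Definition~\ref{def:w-intt-design} for every element of $T_{n-1}$.

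There is essentially no hard step: the only subtlety is spotting the correct re-indexing that merges the two cases into the single expression $x_i = (-1)^i i/(n+1)$ with weight $i\binom{2n}{n-i}$, after which the computation is a single line and the result follows immediately from Lemma~\ref{lem:binom}.
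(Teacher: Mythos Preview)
Your proof is correct and is essentially the same as the paper's: the paper writes the weighted power sum as the difference of the even-$i$ and odd-$i$ contributions and then combines them into $\frac{1}{(n+1)^{2k-1}}\sum_{j=1}^{n}(-1)^j j^{2k}\binom{2n}{n-j}$ before invoking Lemma~\ref{lem:binom}, whereas you perform the unifying substitution $x_i=(-1)^i i/(n+1)$, $f(x_i)=i\binom{2n}{n-i}$ up front. The arguments are identical in substance.
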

\begin{proof}
For $1\leq k\leq n-1$, we have
\begin{align*}
&\sum_{x\in\R} x^{2k-1}\lambda(x) 	 
\\&=
\frac{1}{(n+1)^{2k-1}}\left(  \sum_{j=1}^{\left\lfloor n/2\right\rfloor} (2j)^{2k} \binom{2n}{n-2j}
- \sum_{j=1}^{\left\lceil n/2\right\rceil} (2j-1)^{2k} \binom{2n}{n-2j+1} \right)
\nexteq
\frac{1}{(n+1)^{2k-1}} \sum_{j=1}^{n} (-1)^j j^{2k} \binom{2n}{n-j}
\nexteq
0,
\end{align*}
by Lemma~\ref{lem:binom}.
\end{proof}
}



Finally,  we show that the inequality $n\le2m$ in 
Theorem~\ref{thm:sph_main} cannot be weakened.
We need some preparations.



\begin{lem}
\label{lem:embedding}
Let $m$ be a positive integer.
If $X$ is a spherical \( T_m \)-design on \(\mathbb{S}^{d-1} \),
then \( X \) is also a spherical \( T_m \)-design on \(\mathbb{S}^{d'-1} \)
for \( d' > d \).
\end{lem}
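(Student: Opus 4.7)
The plan is to invoke the characterization of spherical $T_m$-designs given by Lemma~\ref{lem:sphTm-design} and reduce the embedding statement to a trivial observation about inner products. Identify $\mathbb{R}^d$ with the subspace $\mathbb{R}^d \times \{0\} \subset \mathbb{R}^{d'}$ via $x \mapsto (x, 0, \dots, 0)$, so that $\mathbb{S}^{d-1}$ is embedded in $\mathbb{S}^{d'-1}$ and $X \subset \mathbb{S}^{d-1}$ is viewed as a subset of $\mathbb{S}^{d'-1}$.

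By Lemma~\ref{lem:sphTm-design}, it suffices to verify that $\sum_{x \in X} \langle x, a' \rangle^k = 0$ for every $a' \in \mathbb{R}^{d'}$ and every $k \in T_m$. I would write an arbitrary $a' \in \mathbb{R}^{d'}$ as $a' = (a, b)$ with $a \in \mathbb{R}^d$ and $b \in \mathbb{R}^{d'-d}$. Since every $x \in X$ has its last $d'-d$ coordinates equal to $0$, we get $\langle x, a' \rangle_{\mathbb{R}^{d'}} = \langle x, a \rangle_{\mathbb{R}^d}$.

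Consequently,
\[
\sum_{x \in X} \langle x, a' \rangle^k = \sum_{x \in X} \langle x, a \rangle^k,
\]
and the right-hand side vanishes for every $k \in T_m$ by the hypothesis that $X$ is a spherical $T_m$-design on $\mathbb{S}^{d-1}$, applying Lemma~\ref{lem:sphTm-design} in the reverse direction. Applying Lemma~\ref{lem:sphTm-design} once more in the forward direction on $\mathbb{S}^{d'-1}$ gives the conclusion. There is really no obstacle: the content of the lemma is essentially that the characterization in Lemma~\ref{lem:sphTm-design} is dimension-free, since the test functions $a \mapsto \langle x, a \rangle^k$ only see the span of $X$.
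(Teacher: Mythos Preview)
Your proof is correct and follows exactly the same approach as the paper: both invoke Lemma~\ref{lem:sphTm-design} and observe that the inner product $\langle x,a\rangle$ is unchanged when $x$ and $a$ are viewed in the larger ambient space $\mathbb{R}^{d'}$. Your version simply spells out the embedding and the decomposition $a'=(a,b)$ in more detail.
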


\begin{proof}
This is immediate from Lemma~\ref{lem:sphTm-design}, since
the inner product $\langle x,a\rangle$ for vectors $x,a\in\R^d$
remains the same after regarding them as vectors in $\R^{d'}$.
\end{proof}

\begin{prop}
\label{prop:sph_opt}
If \( n > 2m \) and \( n \) is odd, then
there exists a non-antipodal spherical $T_m$-design
with $n$ points.
In particular, $2m+1$ is the smallest size of a non-antipodal
spherical $T_m$-design.
\end{prop}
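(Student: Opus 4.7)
The plan is to build examples by stitching together two simpler ingredients: the regular $(2m+1)$-gon, and arbitrary antipodal pairs, both of which are known (or immediately) spherical $T_m$-designs. The ``In particular'' half is then automatic from Theorem~\ref{thm:sph_main}.

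\textbf{Step 1: base case \(n=2m+1\).}
By \cite{MR679209}, the regular $(2m+1)$-gon $X_0\subset\mathbb{S}^1$ is a spherical $2m$-design, hence a spherical $T_m$-design. It consists of $2m+1$ points, and since $2m+1$ is odd, no two of its vertices are antipodal, so $X_0\neq -X_0$. This gives the required example in the smallest odd case.

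\textbf{Step 2: reduction of general odd $n>2m$ to adding antipodal pairs.}
Write $n=(2m+1)+2k$ with $k\ge 1$. By Lemma~\ref{lem:embedding}, I may regard $X_0$ as a subset of $\mathbb{S}^{d-1}$ for any $d\ge 3$; in particular I have ample room to choose $k$ distinct unit vectors $a_1,\dots,a_k\in\mathbb{S}^{d-1}$ such that the set
\[
X \;=\; X_0\;\cup\;\{a_1,-a_1,\dots,a_k,-a_k\}
\]
has $|X|=n$, i.e., all $2k$ new points lie outside $X_0\cup(-X_0)$. Each antipodal pair $\{a_i,-a_i\}$ is trivially a spherical $T_m$-design, because $\langle a_i,v\rangle^{2j-1}+\langle -a_i,v\rangle^{2j-1}=0$ for every $v\in\R^d$ and every $j\ge 1$. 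Applying Lemma~\ref{lem:sphTm-design} termwise,
\[
\sum_{x\in X}\langle x,v\rangle^{2j-1}
=\sum_{x\in X_0}\langle x,v\rangle^{2j-1}+\sum_{i=1}^k\bigl(\langle a_i,v\rangle^{2j-1}+\langle -a_i,v\rangle^{2j-1}\bigr)=0
\]
for every $v\in\R^d$ and every $j\in\{1,\dots,m\}$, so $X$ is a spherical $T_m$-design. It remains non-antipodal since the vertices of $X_0$ still lack antipodal partners inside $X$ (the $a_i,-a_i$ were chosen outside $-X_0$). This proves the first assertion.

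\textbf{Step 3: optimality.}
Suppose $X$ is a non-antipodal spherical $T_m$-design with $|X|\le 2m$. Theorem~\ref{thm:sph_main} then forces $X$ to be antipodal, a contradiction. Hence any non-antipodal spherical $T_m$-design has at least $2m+1$ points, and Step~1 exhibits one of exactly this size; so $2m+1$ is the smallest possible size.

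There is no substantial obstacle here: the only things one has to check carefully are that the added antipodal pairs can be placed disjointly from $X_0$ (which needs $d\ge 3$, supplied by Lemma~\ref{lem:embedding}), and that the $T_m$-design condition adds over disjoint unions, which is immediate from the characterization in Lemma~\ref{lem:sphTm-design}.
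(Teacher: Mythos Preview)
Your proof is correct and follows essentially the same approach as the paper: start from the regular $(2m+1)$-gon, adjoin antipodal pairs to reach any larger odd $n$, and invoke Theorem~\ref{thm:sph_main} for the lower bound. One small remark: your assertion that placing the extra antipodal pairs ``needs $d\ge 3$'' is unnecessary---the paper simply stays in $\mathbb{S}^1$, where the finite set $X_0\cup(-X_0)$ already leaves infinitely many antipodal pairs available, and only appeals to Lemma~\ref{lem:embedding} afterward to transfer the example to $\mathbb{S}^{d-1}$.
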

\begin{proof}
Let $X$ be a regular $(2m+1)$-gon in $\mathbb{S}^1$.
Then $X$ is a spherical $2m$-design (see \cite{MR679209}),
and hence a spherical $T_m$-design. 
For $n>2m+1$ and $n$ is odd, 
pick $n-2m-1$ pairs of antipodal points in $\mathbb{S}^{1}$ disjoint from $X$.
Adding these pairs of points to $X$ gives
a desired $T_m$-design with $n$ points.
Since $|X|$ is odd, $X$ is not antipodal.

Furthermore, by Lemma~\ref{lem:embedding}, a spherical $T_m$-design in $\mathbb{S}^1$,
when embedded in $\mathbb{S}^{d-1}$, is also a spherical $T_m$-design.

The second assertion then follows from Theorem~\ref{thm:sph_main}.
\end{proof}

In the proof of Proposition~\ref{prop:sph_opt}, we used the natural embedding
of $\mathbb{S}^{1}$ into $\mathbb{S}^{d-1}$. 
By \cite[Theorem~1.1]{MR3339051}, a more elaborate embedding 
of a regular $(2m+1)$-gon into $\mathbb{S}^{d-1}$ can be used to yield
a spherical design with harmonic index $2m$.

Although Proposition~\ref{prop:sph_opt} shows that the inequality $n\leq 2m$
in the assumption of Theorem~\ref{thm:sph_main} cannot be relaxed,
it does not assert the existence of a non-antipodal spherical $T_{m}$-design with
$n$ points for even $n>2m$.
We show that Proposition~\ref{prop:sph_opt} cannot be
strengthened
to include the case where $n$ is even, by proving the non-existence
of a non-antipodal $T_2$-design with $6$ points on $\mathbb{S}^1$.
Note that Proposition~\ref{prop:sph_opt} does imply the existence of a
non-antipodal $T_m$-design with $n\geq4m+2$ points as a union of two disjoint $(2m+1)$-gons.
\begin{prop}
There is no non-antipodal $T_2$-design on $\mathbb{S}^1$ of the cardinality $6$.
\end{prop}

\begin{proof}
Let $X\subset \mathbb{S}^1$ be a spherical $T_2$-design with $|X|=6$ points.
We will show that $X$ is antipodal. Suppose not. Then $X\cap(-X)=\emptyset$ by
Theorem~\ref{thm:sph_main}. In particular, we may assume
by rotating that $(\pm x_1,y_1)\in X$ for some $x_1$ with $0<x_1<1$, $0<y_1$.
The projection $\bar{X}$ of $X$ onto the $x$-coordinate gives a weighted interval $T_2$-design,
and by removing $\pm x_1$ with multiplicity $1$, we obtain a weighted interval $T_2$-design $X'$ consisting of $4$ points. By Proposition~\ref{prop:antipodal2}, these $4$ points form
\aiantipodal\
multiset $X'$. 
Suppose first that 
$4$ points of $X'$ are not distinct. If there are only $3$ distinct points, then 
one of the points
is $0$ with multiplicity $2$, which implies $(0,\pm1)\in X$.
This contradicts the fact that $X\cap(-X)=\emptyset$.
If there are only two distinct points, then
we have $X'=\{\pm x_2\}$, which also contradicts the fact that $X\cap(-X)=\emptyset$.
We next suppose that $X'$ consists of $4$ distinct points $\{\pm x_2,\pm x_3\}$.
Then we may assume
\[X=\{(\pm x_1,y_1),(x_2, y_2),(-x_2,\epsilon_2 y_2),
(x_3, y_3),(-x_3,\epsilon_3 y_3)\},\]
where $\epsilon_2,\epsilon_3\in\{\pm1\}$, $x_2^2+y_2^2=x_3^2+y_3^2=1$,
$y_2>y_3>0$.
Since $X\cap(-X)=\emptyset$, we have $\epsilon_2=1$ and similaly $\epsilon_3=1$.
But then $y_1+ y_2+ y_3=0$ and $y_1^3+ y_2^3+ y_3^3=0$.
This cannot occur, since $y_1y_2y_3\neq0$.
\end{proof}
\subsection*{Acknowledgements}
The authors would like to thank Eiichi Bannai, Hiroshi Nozaki, Kohji Tasaka, and Masatake Hirao for helpful discussions. The third author is supported in part by the Early Support Program for Grant-in-Aid for Scientific Research (B) of Kobe University.

\bibliographystyle{plain}
\bibliography{mms}

\end{document}